\newtheorem{theorem}{Theorem}[section]
\newtheorem{lemma}[theorem]{Lemma}
\newtheorem{corollary}[theorem]{Corollary}
\newtheorem{proposition}[theorem]{Proposition}
\newtheorem{example}[theorem]{Example}
\newtheorem{question}[theorem]{Question}
\newtheorem{observation}[theorem]{Observation}
\newtheorem{conjecture}[theorem]{Conjecture}
\def\noi{\noindent}
\begin{document}

\title{Sharp conditions for the existence of an even $[a,b]$-factor in a graph}

\author{Eun-Kyung Cho\thanks{Department of Mathematics, Pusan National University,
Busan, 46241, Republic of Korea; ekcho@pusan.ac.kr.}
\and
Jong Yoon Hyun\thanks{Korea Institute for Advanced Study, Seoul, 02455, Republic of Korea;  hyun33@kias.re.kr.
Research supported by NRF-2017R1D1A1B05030707.}
\and
Suil O \thanks{Applied Mathematics and Statistics, The State University of New York, Korea, Incheon, 21985;  suil.o@sunykorea.ac.kr (corresponding author). Research supported by NRF-2017R1D1A1B03031758.}
\and
Jeong Rye Park\thanks{Finance.Fishery.Manufacture Industrial Mathematics Center on Big Data,  Pusan National University, Busan, 46241, Republic of Korea; parkjr@pusan.ac.kr. Research supported by NRF-2018R1D1A1B07048197.}
}

\date{}

\maketitle
\begin{abstract} {Let $a$ and $b$ be positive integers. An even $[a,b]$-factor of a graph $G$ is a spanning subgraph $H$ such that for every vertex $v \in V(G)$, $d_H(v)$ is even and $a \le d_H(v) \le b$. Matsuda conjectured that if $G$ is an $n$-vertex 2-edge-connected graph such that $n \ge 2a+b+\frac{a^2-3a}b - 2$, $\delta(G) \ge a$, and $\sigma_2(G) \ge \frac{2an}{a+b}$, then $G$ has an even $[a,b]$-factor. In this paper, we provide counterexamples, which are highly  connected. Furthermore, we give sharp sufficient conditions for a graph to have an even $[a,b]$-factor. For even $an$, we conjecture a lower bound for $\lambda_1(G)$ in an $n$-vertex graph to have an  $[a,b]$-factor, where $\lambda_1(G)$ is the largest eigenvalue of $G$.
}
\end{abstract}

\noi{\bf Keywords.}
Even $[a,b]$-factor;
edge-connectivity;
vertex-connectivity;
spectral radius.\\

\noi{\bf AMS subject classifications.}
05C70, 05C40, 05C50.

\vskip 1.5pc
\section{Introduction} 
\label{sec:introduction}

Throughout all sections, a graph $G$ is finite, simple, and undirected. 
We denote by $V(G)$ the set of vertices of $G$ and by $E(G)$ the set of edges of $G$.
For $S \subseteq V(G)$, we denote by $G-S$ the subgraph of $G$ obtained from $G$ by deleting
the vertices in $S$ together with the edges incident to vertices in $S$. For $S, T \subseteq V(G)$, we denote by $[S,T]$ the set of edges joining $S$ and $T$. The {\it degree} of a vertex $v$ in $G$, written $d_G(v)$ (or $d(v)$ if $G$ is clear from the context), is the number of edges in $E(G)$ incident to the vertex $v$. The {\it minimum} and {\it maximum degree} of a graph $G$ are denoted by $\delta(G)$ and $\Delta(G)$, respectively. For a subgraph $H$ of $G$, the degree of a vertex $v$ in $H$, written $d_H(v)$, is the number of edges in $E(H)$ incident to the vertex $v$.
An {\it even $[a,b]$-factor} of a graph is a spanning subgraph $H$ such that $d_{H}(v)$ is even
and $a \leq d_{H}(v) \leq b$ for all $v \in V(G)$. If $a=b$, then we call it an {\it $a$-factor}.
A graph $G$ is {\it $k$-edge-connected} if for $S \subseteq E(G)$ with $|S| < k$, $G-S$ is connected. The {\it edge-connectivity} of $G$, denoted $\kappa'(G)$, is the maximum $k$ such that $G$ is $k$-edge-connected. A graph $G$ is {\it $k$-vertex-connected} if $|V(G)|\ge k+1$ and for $S \subseteq V(G)$ with $|S| < k$, $G-S$ is connected. The {\it vertex-connectivity} of $G$, denoted $\kappa(G)$, is the maximum $k$ such that $G$ is $k$-vertex-connected.



Kouider and Vestaargard~\cite{KouVes},~\cite{KouVes2} had explored sufficient conditions for a graph to have an even $[a,b]$-factor. In 2005, Matsuda~\cite{Mat} gave a sharp sufficient condition for a graph to have an even $[2,b]$-factor and proposed a conjecture for the existence of an even $[a,b]$-factor in a graph as follows:

\medskip

\noindent
\begin{conjecture}\label{conj}
Let $2 \leq a \leq b$ be even integers.
If $G$ is a graph with $n$ vertices such that 
(i) $\kappa'(G) \ge 2$, 
(ii) $n \geq 2a+b+\frac{a^2-3a}{b}-2$, 
(iii) $\delta(G) \geq a$, and \\
(iv) $\sigma_2(G) \geq \frac{2an}{a+b}$, where $\sigma_2(G)=\min_{uv\not\in E(G)}\left(d(u)+d(v)\right)$, \\
then $G$ contains an even $[a,b]$-factor.
\end{conjecture}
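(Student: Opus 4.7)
My plan is to proceed by contradiction via Lov\'asz's deficiency criterion for even $[a,b]$-factors: a graph $G$ admits such a factor if and only if, for every pair of disjoint sets $S,T\subseteq V(G)$, a certain deficiency $\eta(S,T)$ built from $b|S|$, $\sum_{v\in T}(a-d_{G-S}(v))$, and a parity-weighted count of components of $G-S-T$ is non-positive. Supposing $G$ has no such factor, I would fix a pair $(S,T)$ violating this inequality and attempt to use the hypotheses on $\delta(G)$, $\kappa'(G)$, and $\sigma_2(G)$ to force a contradiction.

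First I would dispose of the boundary cases. If $T=\emptyset$, then the violated inequality combined with $\delta(G)\ge a$ constrains the components of $G-S$ to be small, while $\kappa'(G)\ge 2$ rules out too many of them; the order hypothesis $n\ge 2a+b+(a^2-3a)/b-2$ should then become incompatible with the resulting component count. The $S=\emptyset$ case reduces to an analogous parity/counting argument, largely mirroring Matsuda's handling of $a=2$.

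For the generic case $S,T\ne\emptyset$, I would pick a vertex $u\in T$ (or in an odd component of $G-S-T$) of minimum degree in $G-S$ and seek a non-neighbor $v\in V(G)\setminus(S\cup T)$. The $\sigma_2$ bound then yields $d(u)+d(v)\ge 2an/(a+b)$; feeding this back into the violated deficiency inequality, summed over $T$ and over the offending components, should force $n<2a+b+(a^2-3a)/b-2$, contradicting the order hypothesis. A split into subcases $|T|\ge b$ versus $|T|<b$, again echoing Matsuda's proof for the even $[2,b]$-case, ought to keep the algebra manageable.

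The crux, and the main obstacle, is the step where a non-adjacent pair is needed to exercise the $\sigma_2$ hypothesis: if every vertex in $T$ and in every offending component is joined to all of $V(G)\setminus(S\cup T)$, then $\sigma_2$ provides no information, and only $\delta$ and $\kappa'$ remain available. This is precisely the regime in which I would expect potential counterexamples to hide; since the abstract advertises highly connected counterexamples, I suspect the conjecture fails exactly there --- in graphs containing large clique-like blocks where $\sigma_2$ is met trivially but an arithmetic mismatch between $a$, $b$, and certain substructure sizes still blocks an even $[a,b]$-factor. Consequently I would anticipate that an honest attempt along these lines breaks down in this configuration, and that the paper's resolution is to exhibit precisely such graphs rather than to patch the argument.
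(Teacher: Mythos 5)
Your instinct at the end of the proposal is the right one: this statement is \emph{false}, and the paper does not prove it --- it refutes it. Section 2 exhibits counterexamples for all even $4\le a\le b$ with $b\ge 3a$ (a graph of edge-connectivity exactly $a-1$, Example~\ref{examp1}) and with $b\ge\frac{a^2-3a+a\sqrt{(a-3)(a+1)}}{2}$ (a graph of vertex-connectivity exactly $a-1$, Example~\ref{examp2}); even for $a=2$ the conjecture already fails at $n=b+2$ by Matsuda's own Remark 3. So no proof along the lines you sketch can be completed, and your diagnosis of \emph{where} it must break is precisely the mechanism the counterexamples exploit. In Example~\ref{examp1} the graph is two copies of $K_t$ plus an edge $yz$, with $y$ and $z$ each sending $\frac a2-1$ edges into one clique and $\frac a2$ into the other. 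Every non-adjacent pair has degree sum $a+t-1\approx\frac n2+a\ge\frac{2an}{a+b}$, so the $\sigma_2$ hypothesis is met essentially for free by the clique blocks, exactly the ``regime in which counterexamples hide'' that you identified. The obstruction is parity: $d(y)=a$ forces all $a$ edges at $y$ into any even $[a,b]$-factor $F$, exactly $a-1$ edges of $F$ then leave the block $H_1$, and $\sum_{v}d_{H_1\cap F}(v)=\sum_v d_F(v)-(a-1)$ is odd, which contradicts the degree-sum formula. Example~\ref{examp2} runs the same parity count simultaneously against $a+1$ cliques attached to an independent set of $a-1$ degree-$(a+1)$ vertices.

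What is missing from your proposal is therefore only the construction itself: you stop at suspecting failure without producing a graph. To turn the sketch into the paper's content you would need (a) an explicit family realizing the clique-block configuration with an odd number $a-1$ of edges across a cut, (b) the verification that conditions (ii)--(iv) hold, which is where the choice of $t$ and the side hypotheses like $b\ge 3a$ enter, and (c) the parity argument above. The paper then salvages a correct statement (Theorem~\ref{main}) by strengthening $\kappa'(G)\ge 2$ to $\kappa(G)\ge a$ and $\sigma_2(G)\ge\frac{2an}{a+b}$ to $\delta(G)\ge\frac{an}{a+b}$, and the proof of \emph{that} theorem does follow the deficiency strategy you outline, via Lov\'asz's $(g,f)$-factor theorem, the Parity Lemma, and a case analysis on $|T|$.
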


However, Conjecture~\ref{conj} is not true even when $a=2$. Remark 3 in~\cite{Mat} says that if $n=b+2$,
then Conjecture~\ref{conj} does not hold. Theorem 8 in~\cite{Mat} says that if we replace $n \ge b+2$ by $n \ge b+3$, then $G$ contains an even $[2,b]$-factor. A result of Iida and Nishimura~\cite{IN} implies that Conjecture~\ref{conj} is true when $a=b$.

For $a \ge 4$, all other conditions in the conjecture are sharp, except $\kappa'(G) \ge 2$. In Section 2,  we provide counterexamples, which are $(a-1)$-edge-connected.
Furthermore, there are also $(a-1)$-vertex-connected graphs satisfying all conditions in Conjecture~\ref{conj}, which do not contain an even $[a,b]$-factor. Thus to guarantee the existence of an even $[a,b]$-factor in a graph, we need high vertex-(or edge-)connectivity. By reinforcing the condition $\sigma_2(G) \ge \frac{2an}{a+b}$ to $\delta(G) \ge \frac{an}{a+b}$, we give sufficient conditions for a graph to have an even $[a,b]$-factor in Theorem~\ref{main}.

\begin{theorem}[Main Theorem]\label{main}
Let $4 \leq a \leq b$ be even integers.
If $G$ is a graph with $n$ vertices
such that (i) $\kappa(G) \ge a$, (ii) $n \geq 2a+b+\frac{a^2-3a}{b}-2$, and (iii) $\delta(G) \geq \frac{an}{a+b}$,
then $G$ contains an even $[a, b]$-factor.
\end{theorem}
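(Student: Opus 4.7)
My plan is to argue by contradiction using a parity factor theorem. Assume $G$ satisfies the hypotheses of Theorem~\ref{main} but contains no even $[a,b]$-factor. Applying Lov\'asz's parity $(g,f)$-factor theorem with $g \equiv a$ and $f \equiv b$ (both even, so the parity constraint reduces exactly to $d_H(v)$ even), we obtain disjoint subsets $S, T \subseteq V(G)$ such that
\[
b|S| + \sum_{v \in T} d_{G-S}(v) - a|T| - q_G(S,T) \le -2,
\]
where $q_G(S,T)$ is the number of components $C$ of $G - (S \cup T)$ with $e_G(V(C), T)$ odd. (The slack of $-2$ rather than $-1$ comes from the standard parity of the defect in the even case.) Any such component contributes at least one edge to $T$, which gives the useful bound $q_G(S,T) \le \sum_{v \in T} d_{G-S}(v)$, as well as the trivial bound $q_G(S,T) \le n - |S| - |T|$.

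Next, I would substitute the degree hypothesis. Each $v \in T$ satisfies $d_{G-S}(v) \ge \delta(G) - |S| \ge \frac{an}{a+b} - |S|$, so the inequality above rearranges to
\[
b|S| + |T|\!\left(\tfrac{an}{a+b} - |S| - a\right) \le q_G(S,T) - 2 \le n - |S| - |T| - 2.
\]
This is a linear inequality in $|S|$ and $|T|$ that, together with the size hypothesis $n \ge 2a+b+\frac{a^2-3a}{b}-2$, should be impossible to satisfy. To close the argument I would split into cases on $|S|$, using the connectivity hypothesis $\kappa(G) \ge a$: if $|S| \le a-1$, then $G - S$ is connected, so $q_G(S,T)$ is severely restricted (in particular, at most $1$ when $T$ is empty or small), while if $|S| \ge a$, the term $b|S|$ dominates. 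The case $T = \emptyset$ (so $q_G(S,\emptyset)$ counts components of $G - S$) is reduced by $\kappa(G) \ge a$ to $|S| \ge a$, after which the size condition and $b \ge a$ give a direct contradiction.

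The main obstacle will be the intermediate case where $|S|$ and $|T|$ are both moderate and $q_G(S,T)$ is close to its upper bound $n - |S| - |T|$. There, the cleanest plan is to exploit $\kappa(G) \ge a$ to show that the components $C$ counted by $q_G(S,T)$ altogether send at least $a$ edges into $S \cup T$, which after pigeonholing with the edge count $\sum_{v \in T} d_{G-S}(v)$ yields a stronger lower bound on the left-hand side. Optimizing the resulting inequality over $(|S|, |T|)$ is exactly what produces the threshold $n = 2a + b + \frac{a^2 - 3a}{b} - 2$, matching the extremal constructions from Section~2; any $n$ above this threshold forces a strict contradiction, which completes the proof.
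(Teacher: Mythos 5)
Your setup coincides with the paper's: Lov\'asz's parity $(g,f)$-theorem with $g\equiv a$, $f\equiv b$ yields disjoint $S,T$ with $q(S,T)-b|S|+a|T|-\sum_{v\in T}d_{G-S}(v)\ge 2$ (the paper packages this as Corollary~\ref{cor} plus the Parity Lemma), and your dichotomy on $|S|$ is essentially the paper's dichotomy at $|T|=b$, since $a|T|>b|S|$ forces $|S|\le a-1$ precisely when $|T|\le b$. But the plan has a genuine gap at its center: the claim that substituting $d_{G-S}(v)\ge \delta(G)-|S|$ and $q(S,T)\le n-|S|-|T|$ produces a linear inequality that is ``impossible to satisfy'' under hypothesis (ii) is false in the regime $|T|\le b$. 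There the naive substitution only yields $0<q(S,T)-|T|$, i.e.\ $q(S,T)\ge b+1$, and no contradiction; this is exactly where the paper works hardest. Its Case 2 uses $\kappa(G)\ge a$ not to bound the total number of edges that the counted components send into $S\cup T$ (your proposed mechanism, which is far too weak to close the argument), but to prove the exact identity $|S|=a-l$, where $l$ is the minimum odd value of $|[Q,T]|$ over those components (the paper's Claim 1), and then, via a pigeonhole on degrees inside a smallest such component, to show that every such component has at least $\delta(G)-|S|$ vertices. Only then does the count $n\ge|S|+|T|+q_l\left(\delta(G)-|S|\right)$ collide with $\delta(G)\ge\frac{an}{a+b}$.

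A second, smaller gap: ``optimizing the resulting inequality over $(|S|,|T|)$'' does not reach the stated threshold. The relevant quadratic $f(|T|)$ is negative at $|T|=b+1$ and $|T|=a+b-3$ under hypothesis (ii), but forcing $f(a+b-2)<0$ and $f(a+b-1)<0$ requires $n\ge 2a+b+\frac{a^2-3a}{b}+1$ (Proposition~\ref{f(x)}(ii)); the paper's Case 4-2 closes the remaining window of width $3$ by a separate argument that repeatedly upgrades the integer $\delta(G)-n+|T|$ using integrality and the parity slack of $2$. As written, your sketch would at best prove the theorem with a slightly larger lower bound on $n$, and it is still missing the entire $|T|\le b$ analysis.
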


Katerinis~\cite{K}, and Egawa and Enomoto~\cite{EE} independently showed  that Theorem~\ref{main} is true when $a=b$.
In this paper, we prove for all $4 \le a \le b$ including the case $a=b$.
In the papers~\cite{K,EE}, to have an $[a,a]$-factor (or $a$-factor), one of the sufficient conditions is just ``connected''. However, if there is an enough gap between $a$ and $b$, then to have an even $[a,b]$-factor, a graph must be highly connected (See Section~\ref{sec:matsuda_conjecture}).

Note that Condition (ii) and (iii) in Theorem~\ref{main} imply that $\delta(G) \ge a+1$.
If $\delta(G) \le a$, then Condition (iii) says $\frac{an}{a+b} \le a$. Thus we have $n \le a+b$,
which contradicts Condition (ii). 

The examples in Section 2 show that the conditions in Theorem~\ref{main} are sharp. In Section 3, we prove Theorem~\ref{main} by using Corollary~\ref{cor} of Lovasz's $(g,f)$-factor Theory.

\begin{theorem}[Lovasz's $(g,f)$-factor Theory \cite{Lov}]\label{lov}
Let $G$ be a graph and let $g, f$ be two integer valued functions defined on $V(G)$
such that $0 \leq g(x) \leq f(x) \leq d_{G}(x)$ for all $x \in V(G)$.
Then $G$ has a $(g, f)$-factor if and only if
$$\sum_{v \in T} \left(d(v)-g(v)\right) + \sum_{u \in S} f(u) -|[S,T]|- q(S,T) \geq 0$$
for all disjoint subsets $S$ and $T$ of $V(G)$, where $q(S, T)$ is the number of components $Q$
of $G - (S \cup T)$ such that $g(v) = f(v)$ for all $v \in V(Q)$ and
\[
|[Q, T]| + \sum_{ v \in V(Q)} f(v) \equiv 1~ (\text{mod } 2).
\]
\end{theorem}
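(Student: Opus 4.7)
The plan is to prove both directions of Lov\'asz's $(g,f)$-factor theorem separately: necessity by a direct double-counting argument and sufficiency by reduction to Tutte's perfect-matching theorem via an auxiliary graph construction.

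For necessity, assume $G$ has a $(g,f)$-factor $F$ and fix disjoint $S,T \subseteq V(G)$. First I would classify the $F$-edges incident to $T$ according to their other endpoint (inside $T$, inside $S$, or inside some component $Q$ of $G-(S\cup T)$) and combine the resulting identity with the inequalities $d_F(v)\ge g(v)$ for $v\in T$, $d_F(u)\le f(u)$ for $u\in S$, and $|E_F(S,T)|\le |[S,T]|$. This yields the desired inequality modulo the $q(S,T)$ term. The $q(S,T)$ term is then supplied by a parity argument: for each component $Q$ that is ``critical'' in the sense of the theorem, the equalities $d_F(v)=g(v)=f(v)$ for $v\in V(Q)$ combined with the stated parity of $|[Q,T]|+\sum_{v\in V(Q)} f(v)$ force $|E_F(Q,T)|\ge 1$, so each critical component contributes an extra unit to the count of $F$-edges leaving $T$.

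For sufficiency I would construct an auxiliary graph $H$ such that $H$ has a perfect matching if and only if $G$ has a $(g,f)$-factor. For each vertex $v$ of $G$ I replace $v$ by a gadget with $d(v)$ ``port'' vertices (one per edge incident to $v$) together with auxiliary vertices arranged so that the number of ports left unmatched inside the gadget can be any integer in $[g(v), f(v)]$ and no other value; each $G$-edge $uv$ becomes an $H$-edge between the corresponding ports of the two gadgets. A perfect matching of $H$ then selects those $G$-edges whose port pairs are matched across gadgets, giving a $(g,f)$-factor of $G$. Applying Tutte's theorem in its deficiency form to $H$, failure of the stated condition on $G$ produces some $W\subseteq V(H)$ with $o(H-W)>|W|$; by a shifting argument I would reduce to a $W$ that respects the gadget structure, namely $W$ contains entire gadgets over some set $S\subseteq V(G)$ and only port vertices of gadgets over a disjoint set $T\subseteq V(G)$. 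Translating $|W|$ and $o(H-W)$ back into quantities on $G$ then reproduces the four terms $\sum_{u\in S}f(u)$, $\sum_{v\in T}(d(v)-g(v))$, $|[S,T]|$, and $q(S,T)$ in the theorem's inequality, contradicting the hypothesis.

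The principal obstacle is the dual problem of designing the correct vertex gadget and then controlling an arbitrary Tutte-violating set $W$ inside the resulting auxiliary graph $H$. The gadget must be rigid enough to realize exactly the interval $[g(v), f(v)]$ of unmatched ports, yet structured so that the parity of odd components of $H-W$ corresponds cleanly to the parity condition $|[Q,T]|+\sum_{v\in V(Q)} f(v)\equiv 1 \pmod 2$ defining $q(S,T)$. Reducing a general $W$ to one that respects gadgets without decreasing the deficiency $o(H-W)-|W|$ is the most delicate technical step; it is typically handled by showing that replacing a partially intersected gadget either by its full port set (pushing that vertex into $T$) or by its full vertex set (pushing it into $S$) preserves or improves the deficiency.
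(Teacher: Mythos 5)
The paper offers no proof of this statement: it is imported verbatim from Lov\'asz \cite{Lov}, with only the remark that Tutte \cite{Tut2} later derived it from the $f$-factor theorem \cite{Tut1}. Your plan follows that classical reduction route, but it contains one concrete error and leaves the genuinely hard construction undone. In the necessity direction, the parity argument does not force $|E_F(Q,T)|\ge 1$ for a critical component $Q$. Write $e_F(X,Y)$ for the number of $F$-edges joining $X$ and $Y$. Since every neighbour of $Q$ outside $Q$ lies in $S\cup T$, and $d_F(v)=f(v)$ on $V(Q)$, the degree-sum formula gives $\sum_{v\in V(Q)}f(v)\equiv e_F(Q,S)+e_F(Q,T)\pmod 2$; combined with $|[Q,T]|+\sum_{v\in V(Q)}f(v)\equiv 1\pmod 2$ this yields that $e_F(Q,S)+\bigl(|[Q,T]|-e_F(Q,T)\bigr)$ is odd, hence at least $1$. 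A critical component with $e_F(Q,T)=0$ (say $|[Q,T]|$ odd and every $F$-edge leaving $Q$ ending in $S$) is entirely possible, so the unit that pays for $-q(S,T)$ is not ``an extra $F$-edge leaving $T$'' but the mixed quantity above: its $Q$--$S$ part must be charged against $\sum_{u\in S}f(u)\ge\sum_{u\in S}d_F(u)$ and its $Q$--$T$ part against $\sum_{v\in T}\bigl(d(v)-g(v)\bigr)\ge\sum_{v\in T}\bigl(d(v)-d_F(v)\bigr)$, after cancelling $|[S,T]|$ against the $F$- and non-$F$-edges between $S$ and $T$. Your bookkeeping, routed entirely through $F$-edges at $T$, does not close as written, though it is repairable.

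For sufficiency you have described the architecture of a proof rather than supplied one. When $g(v)<f(v)$ the gadget is the entire difficulty: Tutte's complete-bipartite gadget with $d(v)-f(v)$ inner vertices realizes only the single value $f(v)$, and any device built from perfect matchings controls the number of externally matched ports only up to parity --- which is precisely why the exceptional components in the theorem are those with $g\equiv f$, and why that restriction has to emerge from the construction rather than be imposed afterwards. Equally, reducing an arbitrary Tutte-violating set $W\subseteq V(H)$ to one that respects the gadgets without decreasing $o(H-W)-|W|$, and then translating that deficiency into the four terms of the stated inequality with $q(S,T)$ counting exactly the right components, is the bulk of Tutte's argument in \cite{Tut2}. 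You have correctly identified these as the obstacles, but identifying them is not overcoming them; as it stands the sufficiency direction is a program, not a proof.
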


\begin{corollary}\label{cor}
Let $a$ and $b$ be even integers with $2 \leq a \leq b$.
A graph $G$ has an even $[a,b]$-factor if 
$$q(S,T) -b|S|+a|T|- \sum_{v \in T} d_{G - S}(v) \leq 0$$
for all disjoint choices $S, T \subseteq V(G)$,
where $q(S,T)$ is the number of components $Q$ of $G - (S \cup T)$
such that $|[Q,T]|$ is odd.
\end{corollary}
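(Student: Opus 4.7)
The plan is to apply Lovasz's $(g,f)$-factor theorem (Theorem~\ref{lov}), or more precisely its parity-factor variant, with the constant functions $g\equiv a$ and $f\equiv b$. Since $a,b$ are even, a spanning subgraph $H$ with $a\leq d_H(v)\leq b$ and $d_H(v)\equiv 0\pmod 2$ for every $v$ is exactly a $(g,f)$-parity factor for this choice, so finding an even $[a,b]$-factor reduces to checking the parity-factor criterion.

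First I would invoke the parity version: such a factor exists provided
\[
\sum_{u\in S} f(u) \;+\; \sum_{v\in T}\bigl(d_{G-S}(v) - g(v)\bigr) \;\geq\; q^{\ast}(S,T)
\]
for all disjoint $S,T\subseteq V(G)$, where $q^{\ast}(S,T)$ counts components $Q$ of $G-(S\cup T)$ such that $|[Q,T]|+\sum_{v\in V(Q)} f(v)$ is odd. Next I would simplify by substituting $g\equiv a$ and $f\equiv b$: because $b$ is even, $\sum_{v\in V(Q)} f(v)=b|V(Q)|$ is always even, so the component parity condition collapses to $|[Q,T]|$ being odd, which is precisely the $q(S,T)$ appearing in the corollary. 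Rearranging the displayed inequality then yields $q(S,T) - b|S| + a|T| - \sum_{v\in T} d_{G-S}(v) \leq 0$, as required. Since the corollary only asserts sufficiency, only this direction of the equivalence is needed.

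The main obstacle is the gap between Theorem~\ref{lov} as stated and the parity variant actually used above. In Theorem~\ref{lov}, the $q(S,T)$-term only registers components on which $g\equiv f$, so plugging in $g\equiv a < b\equiv f$ directly annihilates that term and produces only an ordinary $[a,b]$-factor with no parity control — much weaker than what the corollary claims. To recover the parity variant from Theorem~\ref{lov}, I would use the classical gadget reduction: attach to each vertex $v\in V(G)$ a small auxiliary subgraph on which $g'=f'$ is forced so that the parity of $d_H(v)$ at the original vertex is pinned, apply Theorem~\ref{lov} to the enlarged graph $G'$, and verify both that $(g',f')$-factors of $G'$ restrict to even $[a,b]$-factors of $G$ and that the components of $G'-(S\cup T)$ satisfying the $g'\equiv f'$ and odd-parity conditions correspond exactly to the components $Q$ of $G-(S\cup T)$ with $|[Q,T]|$ odd. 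This reduction, which is classical in $(g,f)$-factor theory, is where all the technical bookkeeping lives; once it is in place, the derivation of the corollary is just the two-line substitution above.
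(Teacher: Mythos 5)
Your proposal follows the same route as the paper --- substitute $g\equiv a$ and $f\equiv b$ into Lov\'asz's theorem, note that since $b$ is even the quantity $\sum_{v\in V(Q)}f(v)=b|V(Q)|$ is even so the component parity condition collapses to $|[Q,T]|$ odd, and rearrange (using $\sum_{v\in T}d_G(v)-|[S,T]|=\sum_{v\in T}d_{G-S}(v)$) to get the displayed inequality. In fact you are more careful than the source: the paper's entire justification is the sentence ``By applying Theorem~\ref{lov} when $g(x)=a$ and $f(x)=b$, we have Corollary~\ref{cor},'' and you correctly point out that this literal substitution does not work when $a<b$, because Theorem~\ref{lov} as stated is the \emph{non-parity} $(g,f)$-factor theorem: its $q(S,T)$ counts only components on which $g\equiv f$, so for $a<b$ the $q$-term vanishes and one obtains merely an ordinary $[a,b]$-factor with no control on degree parities. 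What the corollary actually requires, and what the paper tacitly invokes, is the parity version of Lov\'asz's theorem (drop the $g\equiv f$ restriction on the counted components, assume $g(v)\equiv f(v)\pmod 2$ everywhere, and demand $d_H(v)\equiv f(v)\pmod 2$ in the factor); this is also in Lov\'asz's paper and, as the paper notes via Tutte, derivable from the $f$-factor theorem. Your proposal leaves that derivation as a classical black box via the gadget reduction you sketch rather than executing it, but since the paper supplies no argument at all for the same step, your write-up is at least as complete as the paper's; for a fully self-contained proof you would either carry out the gadget construction or cite the parity-factor theorem directly in the form stated above.
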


By applying Theorem~\ref{lov} when $g(x)=a$ and $f(x)=b$, we have Corollary~\ref{cor}.

We point out that Tutte~\cite{Tut2} proved that the Lovasz's $(g,f)$-factor Theory~\cite{Lov}
can be demonstrated by using Tutte's $f$-factor Theory~\cite{Tut1}.

The Parity Lemma is also used in the proof of our main result.

\begin{lemma}[Parity Lemma]\label{paritylemma}
Let $a$ and $b$ be positive integers with the same parity.
Then $q(S,T) -b|S|+a|T|- \sum_{v \in T} d_{G - S}(v)$ has the same parity as $a$ and $b$
for any disjoint sets $S, T \subseteq V(G)$.
\end{lemma}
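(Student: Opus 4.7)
The proof is a direct parity computation and I would carry it out in a few short steps. First, I would decompose the sum $\sum_{v \in T} d_{G-S}(v)$ according to where the other endpoint of each incident edge lies in $G-S$. An edge with both endpoints in $T$ is counted twice, while an edge joining $T$ to $R := V(G) \setminus (S \cup T)$ is counted once. Letting $E(G[T])$ denote the edges of $G$ with both endpoints in $T$, this gives
\[
\sum_{v \in T} d_{G-S}(v) \;=\; 2\,|E(G[T])| + |[T, R]| \;\equiv\; |[T, R]| \pmod{2}.
\]

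Second, I would write $R$ as the disjoint union of the vertex sets of the components $Q$ of $G - (S \cup T)$, which yields $|[T, R]| = \sum_{Q} |[Q, T]|$. Reducing modulo $2$, only those components with $|[Q, T]|$ odd contribute, and these are precisely the components counted by $q(S, T)$. Therefore
\[
\sum_{v \in T} d_{G-S}(v) \;\equiv\; q(S,T) \pmod{2},
\]
so that $q(S,T) - \sum_{v \in T} d_{G-S}(v)$ is even.

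Finally, since $a$ and $b$ have the same parity (and in every application made in this paper both are even), the term $-b|S|+a|T|$ is even, which is the common parity of $a$ and $b$. Adding this to the previous congruence yields
\[
q(S,T) - b|S| + a|T| - \sum_{v \in T} d_{G-S}(v) \;\equiv\; 0 \;\equiv\; a \;\equiv\; b \pmod{2},
\]
which is exactly the claim. I do not foresee any serious obstacle; the argument is pure parity bookkeeping, and the lone subtlety is the handshake-style cancellation of the edges inside $G[T]$ together with the clean decomposition of $R$ into the connected components of $G - (S \cup T)$.
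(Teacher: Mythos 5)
The paper states the Parity Lemma without proof, so there is no in-text argument to compare against; your proposal supplies the standard handshake computation, and the two substantive steps are correct: $\sum_{v\in T} d_{G-S}(v) = 2|E(G[T])| + |[T,R]| \equiv \sum_Q |[Q,T]| \equiv q(S,T) \pmod 2$, whence $q(S,T) - \sum_{v\in T} d_{G-S}(v)$ is even. This is exactly the bookkeeping one would expect to be left implicit, and it is what the paper needs (the lemma is invoked only to upgrade the strict inequality $0 < q(S,T)-b|S|+a|T|-\sum_{v\in T}d_{G-S}(v)$ to $2 \le \cdots$ in Case 4-2, where $a$ and $b$ are even).

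The one point to tighten is your final step. You write that ``since $a$ and $b$ have the same parity \ldots\ the term $-b|S|+a|T|$ is even,'' but equal parity alone does not give this: if $a$ and $b$ are both odd, then $-b|S|+a|T| \equiv |S|+|T| \pmod 2$, which can be either parity. Your parenthetical remark that both are even in every application is doing all the work, and it should be promoted to a hypothesis. In fact, with $q(S,T)$ as defined in Corollary~\ref{cor} (the number of components $Q$ with $|[Q,T]|$ odd), the lemma as literally stated is false for odd $a=b$: taking $S=T=\emptyset$ gives the value $0$, which is even. The odd case of the classical parity lemma requires the Tutte--Lov\'asz count of $q(S,T)$ from Theorem~\ref{lov}, in which a component $Q$ is counted when $|[Q,T]| + \sum_{v\in V(Q)} f(v)$ is odd; the term $\sum_{v\in V(Q)} f(v)$ vanishes mod $2$ only when $f\equiv b$ is even, which is why the simplified count suffices here. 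So your proof is correct and complete for even $a$ and $b$ --- the only case the paper uses --- but you should either restrict the statement to even $a,b$ or replace $q(S,T)$ by the general count before claiming the odd case.
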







\section{Sharp Examples} 
\label{sec:matsuda_conjecture}




In this section, by providing Example~\ref{examp1} and Example~\ref{examp2}, we show why high edge-(or vertex-)connectivity in Theorem~\ref{main} requires. Note that Matsuda~\cite{Mat} showed in the last section that Condition (ii) and (iii) in Theorem~\ref{main} are sharp.

Example~\ref{examp1} shows that if a graph satisfying Condition (ii), (iii), and (iv) in Conjecture~\ref{conj} is not $a$-edge-connected, then we cannot guarantee the existence of even $[a,b]$-factor in the graph. Thus the graph in Example~\ref{examp1} is a counterexample to Conjecture~\ref{conj}, which has edge-connectivity equal to $a-1$.

\begin{example}\label{examp1}{\rm
	Let $a$ and $b$ be even integers such that $12\leq 3a\leq b$,
	and let $t$ be an integer such that $t \geq\frac{(a+b)^2-3a-4b}{2b} (=\frac{2a+b-4}2+\frac{a^2-3a}{2b} > a)$.  For $i \in \{1,2\}$, let $H_i$ be a copy of the complete graph on $t$ vertices, and let $V(H_i)=\{x_{i1},\ldots,x_{it}\}$.  Let $H_3$ be a copy of the complete graph on 2 vertices and let $V(H_3)=\{y, z\}$. Suppose that $H$ is the graph obtained from $H_1$, $H_2,$ and $H_3$ by adding edges between $y$ and $x_{11},\ldots, x_{1(\frac a2 -1)}, x_{2 \frac a2},\ldots, x_{2(a-1)}$, and between $z$ and $x_{21},\ldots, x_{2(\frac a2 -1)},x_{1\frac a2},\ldots,$ $x_{1(a-1)}$
	(see Figure~\ref{Matsuda_edge_conn}).}
	
	\end{example}

	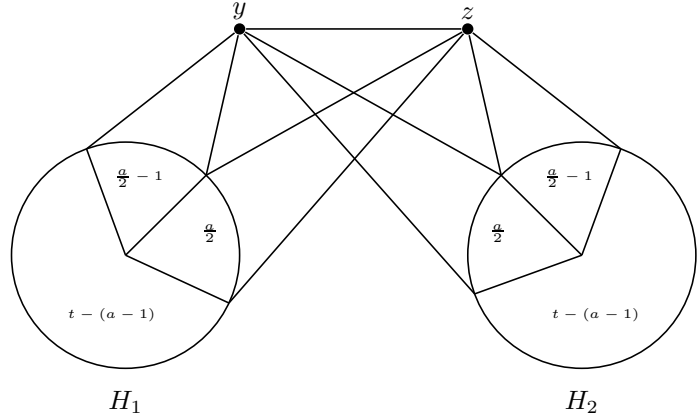
\begin{figure}[h]
	\centering
	\begin{tikzpicture}[auto,node distance=1cm,semithick,scale=1.5]
		\tikzstyle{vertex}=[circle,fill,inner sep=1.5pt]
		\node (a) 							  {};

		\node[vertex] (La) at (-1,0)		  {};
		\coordinate (LC) at (-2,-2) 		  {};
		\draw (LC) circle (1cm);
		
		\draw (LC) -- +(110:1) coordinate(L1);
		\draw (LC) -- +(45:1)  coordinate(L2);
		\draw (LC) -- +(-25:1) coordinate(L3);

		\node[vertex] (Ra) at (1,0) 	  	  {};
		\coordinate (RC) at (2,-2)  		  {};
		\draw (RC) circle (1cm);

		\draw (RC) -- +(70:1)  coordinate(R1);
		\draw (RC) -- +(135:1) coordinate(R2);
		\draw (RC) -- +(200:1) coordinate(R3);

		\draw (L1) -- (La);
        \draw (L2) -- (La);
        \draw (L2) -- (Ra);
        \draw (L3) -- (Ra);

        \draw (R1) -- (Ra);
        \draw (R2) -- (Ra);
        \draw (R2) -- (La);
        \draw (R3) -- (La);

        \path (La) edge (Ra);
        \node at (-2,-3.3) {$H_1$};
		\node at (2,-3.3)  {$H_2$};
		\draw (La) node[above] {$y$};
		\draw (Ra) node[above] {$z$};

		\draw (230:3.3) node {\tiny $t-(a-1)$};
		\draw (215:2.3) node {\tiny $\frac{a}{2}-1$};
		\draw (235:2.2) node {\tiny $\frac{a}{2}$};

		\draw (310:3.3) node {\tiny $t-(a-1)$};
		\draw (325:2.3) node {\tiny $\frac{a}{2}-1$};
		\draw (305:2.2) node {\tiny $\frac{a}{2}$};
	
	\end{tikzpicture}
	\caption{The graph $H$ in Example~\ref{examp1}}
	\label{Matsuda_edge_conn}
	\end{figure}
	
	\begin{proposition}
	The graph in Example~\ref{examp1} has edge-connectivity equal to $a-1$ and satisfies all conditions in Conjecture~\ref{conj}. Furthermore, it does not contain an even $[a,b]$-factor.
	\end{proposition}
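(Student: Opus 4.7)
The plan rests on the elementary structural fact that $y$ and $z$ each have degree exactly $a$ in $H$, while every vertex of $H_1 \cup H_2$ has degree $t$ (if its index lies in $\{1, \ldots, a-1\}$) or $t-1$ (otherwise). I would then proceed in three stages: establish $\kappa'(H) = a-1$, verify conditions (i)--(iv) of Conjecture~\ref{conj}, and derive a parity obstruction that prevents any even $[a,b]$-factor.

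For the edge-connectivity, the upper bound is witnessed by the cut $[V(H_1) \cup \{y,z\}, V(H_2)]$, whose size is $\tfrac{a}{2} + (\tfrac{a}{2} - 1) = a - 1$. For the matching lower bound, consider an arbitrary non-trivial partition $V(H) = A \sqcup B$: if some pair in $V(H_i)$ is split across the partition, completeness of $H_i$ yields at least $t - 1 \ge a$ crossing edges; otherwise each $V(H_i)$ sits on one side, and a direct check of the four placements of $\{y, z\}$ relative to $\{V(H_1), V(H_2)\}$ shows at least $a - 1$ crossing edges in every case.

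Verifying the four conditions of Conjecture~\ref{conj} is then routine. Condition (i) holds because $a - 1 \ge 2$, and (iii) because $\delta(H) = \min\{a, t-1\} = a$, using $t \ge a + 1$. Substituting $t = \frac{(a+b)^2 - 3a - 4b}{2b}$ into $n = 2t + 2$ yields $2a + b + \frac{a^2 - 3a}{b} - 2$, so (ii) holds (with equality at the extremal $t$). For (iv), the minimum degree sum across a non-edge is achieved by a pair $\{y, x_{1j}\}$ with $j \ge a$, giving $\sigma_2(H) = a + t - 1$; the target inequality $(a+b)(a + t - 1) \ge 4a(t+1)$ rearranges to
\[
(b - 3a)\,t + (a^2 + ab - 5a - b) \ge 0,
\]
in which both summands are non-negative whenever $a \ge 4$ and $b \ge 3a$.

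The heart of the argument is the non-existence of an even $[a,b]$-factor. Suppose for contradiction that $F$ is such a factor. Because $d_H(y) = d_H(z) = a$ and $a \le d_F(\cdot) \le b$, we must have $d_F(y) = d_F(z) = a$, so every edge of $H$ incident to $y$ or $z$ lies in $F$. Since $H$ contains no edges between $V(H_1)$ and $V(H_2)$, the edges of $F$ crossing from $V(H_1)$ to $V(H) \setminus V(H_1)$ are precisely the $\tfrac{a}{2} - 1$ forced edges from $y$ together with the $\tfrac{a}{2}$ forced edges from $z$ into $V(H_1)$. Thus
\[
\sum_{v \in V(H_1)} d_F(v) \;=\; 2\,|E(F[V(H_1)])| + (a - 1),
\]
which is odd; but the left-hand side is a sum of even integers, a contradiction. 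I expect the main bookkeeping burden to lie in the case analysis for the lower bound on $\kappa'(H)$, while the rest of the proof reduces to elementary arithmetic and a one-line parity argument.
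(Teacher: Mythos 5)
Your proposal is correct and follows essentially the same route as the paper: the $a-1$ cut isolating one clique gives the upper bound on $\kappa'(H)$, the conditions are checked by direct computation (your algebraic rearrangement for $\sigma_2$ and the paper's chain of inequalities are interchangeable), and the non-existence argument is the identical parity obstruction — the $a-1$ forced edges from $\{y,z\}$ into $V(H_1)$ make $\sum_{v\in V(H_1)}d_F(v)$ odd. The only cosmetic difference is that you certify the lower bound on $\kappa'(H)$ by exhausting the possible edge cuts, whereas the paper counts edge-disjoint paths; both are sound.
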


	\begin{proof}  
	Since there are $\frac a2 -1$ edges between $y$ and $H_1$ and $\frac a2$ edges between $y$ and $H_2$, and $H_1$ and $H_2$ are both complete graphs, there are exactly $a-1$ edge-disjoint $y-z$ paths including the $yz$ edge. Also, since there are exactly $a-1$ edges between $H_i$ to $H_3$, we have $\kappa'(H)=a-1$.
	
	The order of $H$ is
	\[|V(H)|=2t+2\geq\frac{(a+b)^2-3a-4b}{b}+2=2a+b+\frac{a^2-3a}{b}-2.\]

	Since every vertex in $V(H_1) \cup V(H_2)$ has degree at least $a$ and $d_H(y)=d_H(z)=a$, we have  $\delta(H)=a$.

	Since $\sigma_2(H)=a+(t-1)$ and $t=\frac{|V(H)|-2}{2}$, we have
	\[\sigma_2(H)=a+\frac{|V(H)|}{2}-2
		\geq\frac{|V(H)|}{2}=\frac{2a|V(H)|}{4a}\geq\frac{2a|V(H)|}{a+b}.\]
	Thus $H$ satisfies all conditions in Conjecture~\ref{conj}.
	
	Now, we prove that $H$ does not contain an even $[a,b]$-factor.
	Assume to the contrary that $H$ has an even $[a,b]$-factor $F$. Since $d_H(y)=a$, all edges incident to $y$ must be in $F$. Since $H_1 \cap F$ is also a graph,
	$\sum_{v\in V(H_1\cap F)}d_{H_1\cap F}(v)$ must be even by the degree-sum formula.   Note that the $a-1$ edges incident to both $H_1$ and $H_3$ in $F$ are not in $H_1\cap F$. Thus we have 
	\[\sum_{v\in V(H_1\cap F)}d_{H_1\cap F}(v)
		=\sum_{v\in V(H_1\cap F)}d_{F}(v)-(a-1).\]
However, the degree sum is odd since $a-1$ is odd and every vertex in $F$ has even degree. Thus we have the desired result.

		\end{proof}

Example~\ref{examp2} shows that if a graph satisfying Condition (ii), (iii), and (iv) in Conjecture~\ref{conj} is not $a$-vertex-connected, then we cannot guarantee the existence of even $[a,b]$-factor in the graph. Thus the graph in Example~\ref{examp2} is also a counterexample to Conjecture~\ref{conj}, which is $(a-1)$-vertex-connected. Note that in Example~\ref{examp1}, we require $b \ge 3a$ while in Example~\ref{examp2}, we require $b \ge \frac{a^2-3a+a\sqrt{(a-3)(a+1)}}{2}$.
\begin{example}\label{examp2}{\rm
	Let $a$ and $b$ be even integers at least 4 with $b \geq \frac{a^2-3a+a\sqrt{(a-3)(a+1)}}{2}$. Let $L_0$ be the trivial graph on $a-1$ vertices, and let $V(L_0)=\{y_1,\ldots,y_{(a-1)}\}$. For $1 \le i \le a$, let $L_i$ be a copy of the complete graph on $a+2$ vertices and let $V(L_i)=\{x_{i1},\ldots,x_{i(a+2)}\}$. Let $t$ be a positive integer such that $(a+2 \le) -a^2-a+b+\frac{a^2-3a}b -1 \le t \le -a^2-2a+b+\frac ba +2$. Let $L_{a+1}$ be a copy of the complete graph on $t$ vertices and let $V(L_{a+1})=\{x_{(a+1)1},\ldots, x_{(a+1)t}\}$. Suppose that $L$ is the graph obtained from $L_0,\ldots, L_{a+1}$ by adding edges between $y_{j}$ and $x_{ij}$ for all $i \in \{1,\ldots,a+1\}$ and for all $j \in \{1,\ldots,a-1\}$
	(see Figure~\ref{Matsuda_edge_conn2}).}
	
	\end{example}

	\begin{proposition}\label{prop2}
	The graph in Example~\ref{examp2} has vertex-connectivity equal to $a-1$ and satisfies all conditions in Conjecture~\ref{conj}. Furthermore, it does not contain an even $[a,b]$-factor.
	\end{proposition}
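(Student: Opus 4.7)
I will verify, in order, three claims: (a) $\kappa(L)=a-1$; (b) $L$ satisfies Conditions (i)--(iv) of Conjecture~\ref{conj}; and (c) $L$ has no even $[a,b]$-factor.

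For (a), the upper bound $\kappa(L)\le a-1$ is immediate because $V(L_0)$ is a vertex cut separating the $a+1$ cliques. For the reverse inequality I take any $S\subseteq V(L)$ with $|S|\le a-2$ and show $L-S$ is connected. Each $L_i\setminus S$ is a non-empty clique since $|L_i|\ge a+2>|S|$, and at least one $y_j$ survives. Passing to the bipartite ``reachability'' graph $B$ between $J^\star:=\{j:y_j\notin S\}$ and the $a+1$ super-nodes $L_i\setminus S$, with $y_j\sim L_i$ in $B$ iff $x_{ij}\notin S$, the number of missing edges of $B$ is at most $|S|-|S\cap V(L_0)|\le |J^\star|-1$. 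A short case analysis on a putative disconnecting bipartition of $B$ (isolating one or more super-nodes, isolating some $y_j$'s, or a proper bipartition of each side) shows each case demands more than $|J^\star|-1$ missing edges, a contradiction. Hence $B$, and so $L-S$, is connected.

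For (b), Condition (i) holds since $\kappa'(L)\ge\kappa(L)=a-1\ge 3$. The identity $n=(a-1)+a(a+2)+t=a^2+3a-1+t$ combined with the lower bound on $t$ rearranges exactly to Condition (ii). Condition (iii) is a direct degree check: $d(y_j)=a+1$; for $1\le i\le a$, $d(x_{ij})=a+2$ when $j\le a-1$ and $a+1$ otherwise; and in $L_{a+1}$ the degrees are $t$ and $t-1$, each at least $a+1$ because $t\ge a+2$. Hence $\delta(L)=a+1>a$. For (iv), the non-adjacent pair $(y_j,y_{j'})$ with $j\ne j'$ achieves $\sigma_2(L)=2a+2$, and the prescribed upper bound on $t$ rearranges to $(2a+2)(a+b)\ge 2an$, giving $\sigma_2(L)\ge\tfrac{2an}{a+b}$.

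For (c), I mirror the parity strategy used in the proof of the preceding proposition. Suppose $L$ admits an even $[a,b]$-factor $F$. Since $d_L(y_j)=a+1$ and $d_F(y_j)$ is even with $a\le d_F(y_j)\le b$, necessarily $d_F(y_j)=a$, so exactly one edge at each $y_j$ is missing from $F$; write it as $y_j x_{\iota(j)\,j}$ with $\iota(j)\in\{1,\dots,a+1\}$, and set $c_i=|\iota^{-1}(i)|$. Then the number of edges of $F$ between $V(L_0)$ and $V(L_i)$ equals $(a-1)-c_i$, and applying the degree-sum formula to $F\cap L_i$ together with the facts that $2|E(F\cap L_i)|$ and $\sum_{v\in V(L_i)}d_F(v)$ are both even forces $(a-1)-c_i$ to be even. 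Hence $c_i$ is odd, so $c_i\ge 1$ for each of the $a+1$ indices and $\sum_{i}c_i\ge a+1$, contradicting $\sum_{i}c_i=a-1$. The main obstacle is coordinating this parity argument across all $a+1$ cliques so that the pigeonhole constraint (namely, $a+1$ odd positive integers $c_i$ summing to only $a-1$) produces the contradiction; parts (a) and (b) are bookkeeping guided by the two-sided range prescribed for $t$.
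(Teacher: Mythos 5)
Your proposal is correct and follows essentially the same route as the paper: the same order, degree, and $\sigma_2$ computations for Conditions (ii)--(iv), and the same parity argument for nonexistence of $F$ (each clique $L_i$ must receive an even, hence at most $a-2$, number of $F$-edges from $L_0$, while the vertices $y_j$ emit $a(a-1)$ such edges in total; your bookkeeping with the odd quantities $c_i$ summing to $a-1$ is just the dual of that count). The only difference is cosmetic: for $\kappa(L)\ge a-1$ you argue directly that no set of at most $a-2$ vertices disconnects $L$ via an auxiliary bipartite graph, whereas the paper exhibits $a-1$ internally disjoint paths between the relevant pairs of vertices; both verifications are routine.
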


	\begin{proof}  For each $i \in [a+1]$, there are $a-1$ vertex-disjoint paths between any vertex in $L_i$ and $L_0$ by using the vertex $x_{i1}, x_{i2}, \ldots, x_{i(a-1)}$. Also, for $i \neq j$, there are  $a+1$ vertex-disjoint paths between $y_i$ and $y_j$ by using the path $y_ix_{ki}x_{kj}y_j$ for $k \in [a+1]$. Thus we have $\kappa(L)=(a-1)$.

	Since $t \ge -a^2-a+b+\frac{a^2-3a}b -1$,  the order of $L$ is
	\[|V(L)|=a-1+(a+2)a+t \geq 2a+b+\frac{a^2-3a}{b}-2.\]

	Since for $i \in \{0,1,\ldots,a+1\}$, every vertex in $V(L_i)$ has degree at least $a+1$ and every vertex in $L_0$ has degree $a+1$, we have  $\delta(L)=a+1$.

	Since $\sigma_2(L)=2(a+1)$ and $t \le -a^2-2a+b+\frac ba +2$, we have
	\[\frac{2a|V(L)|}{a+b} = \frac{2a(a^2+3a-1+t)}{a+b} \le \frac{2a(a+b+\frac ba +1)}{a+b} = 2(a+1)
	=\sigma_2(L)\]
	Thus $F$ satisfies all conditions in Conjecture~\ref{conj}.
	
	Now, we prove that $L$ does not contain an even $[a,b]$-factor. Assume to the contrary that $L$ has an even $[a,b]$-factor $F$. Then we have $d_F(v)=a$ for every vertex in $L_0$ since $a$ is even. Since $\sum_{v \in V(L_i \cap F)} d_{L_i\cap F}(v)$ must be even by the degree-sum formula, there are at most $a-2$ edges coming out from $V(L_i)$ in $F$. Thus we have $$a(a-1) \le (a-2)(a+1),$$ which is a contradiction.
		\end{proof}
		
		Proposition~\ref{prop2} shows that Condition (i) in Theorem~\ref{main} is tight.

\begin{figure}[h]
\begin{center}
	\begin{tikzpicture}[auto,node distance=1cm,semithick,scale=1.5]
		\tikzstyle{vertex}=[circle,fill,inner sep=1.5pt]
	
		\node   	  (1)							{$\cdots$};
		\node		  (0) [xshift=15mm,yshift=35mm] {$\cdots$};
		\node 		  (b) [right of=0]  			{$\cdots$};
		\node[vertex] (c) [right of=b]  			{};
		\node[vertex] (d) [left of=0]   			{};
		\node[vertex] (e) [left of=d]   			{};
		
		\draw (e)  node[above, scale=1]      	 {$y_1$};
		\draw (d)  node[above, scale=1]        	 {$y_2$};
		\draw (c)  node[above, scale=1]    {$y_{(a-1)}$};
		
		\node (C1) [xshift=-3cm] 			     {$L_1$};
		\node (C2) [xshift=+3cm] 		         {$L_a$};
		\node (C3) [xshift=+6cm]             {$L_{a+1}$};

		\draw (C1) circle [radius=0.7cm];
		\draw (C2) circle [radius=0.7cm];
		\draw (C3) circle [radius=0.7cm];

		\draw (C1) +(90:0.7) coordinate (L1);
		\draw (C1) +(70:0.7) coordinate (L2);
		\draw (C1) +(50:0.7) coordinate (L3);
		
		\draw (C2) +(130:0.7) coordinate (R1);
		\draw (C2) +(100:0.7) coordinate (R2);
		\draw (C2) +(70:0.7)  coordinate (R3);

		\draw (C3) +(130:0.7) coordinate (K1);
		\draw (C3) +(110:0.7) coordinate (K2);
		\draw (C3) +(90:0.7)  coordinate (K3);

		\path[-]
		(L1) edge (e)
		(L2) edge (d)
		(L3) edge (c);

		\path[-]
		(R1) edge (e)
		(R2) edge (d)
		(R3) edge (c);

		\path[-]
		(K1) edge (e)
		(K2) edge (d)
		(K3) edge (c);

	\end{tikzpicture}
	\caption{The graph $L$ in Example~\ref{examp2}}
	\label{Matsuda_edge_conn2}
\end{center}
\end{figure}
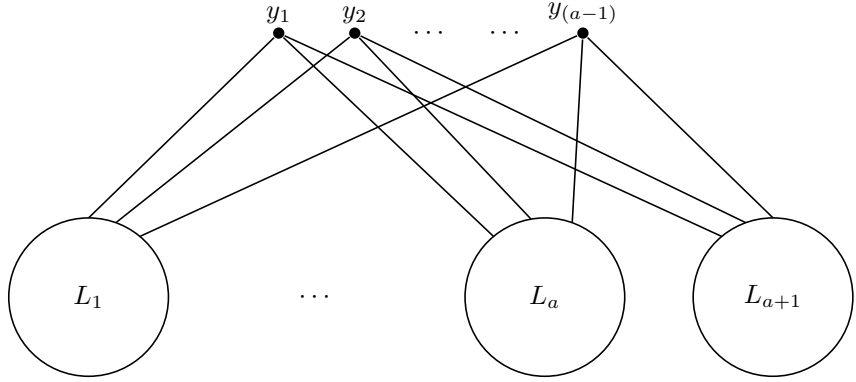


\section{Proof of Theorem~\ref{main}}

When $a=2$, Conjecture~\ref{conj} is true if we replace Condition (ii) by $n \ge b+3$.
So is Theorem~\ref{main} for $a=2$.
There is a counterexample to Conjecture~\ref{conj} if $n=b+2$ (see Remark 3~\cite{Mat}). 
From now, we assume that $a \ge 4$. The examples in Section 2 and the last section in~\cite{Mat} say that we require the conditions in Theorem~\ref{main} for a graph to have an even $[a,b]$-factor.

In this section, we prove Theorem~\ref{main}. Note that for $a=b$, Theroem~\ref{main} is true by Katerinis~\cite{K}, Egawa and Enomoto~\cite{EE}, and Iida and Nishimura~\cite{IN}. In this paper, we prove for all $4 \le a \le b$ including the case $a=b$.
To prove Case 3 and Case 4-1 in the proof of Theorem~\ref{main}, we use Proposition~\ref{f(x)}.

\begin{proposition}\label{f(x)}
Let $a, b, n,$ and $p$ be integers such that $4 \leq a \leq b$ and $p>0$, and 
let $f(x)=n+(a-1-\frac{an}{a+b})x+(x-1-b)\frac{ax-p}b$.

\noindent
(i) If $n \geq 2a+b+\frac{a^2-3a}{b}-2$,
then $f(b+1) <0$ and $f(a+b-3)<0$.

\noindent
(ii) If $n \geq 2a+b+\frac{a^2-3a}{b}+1$,
then $f(a+b-1)<0$ and $f(a+b-2)<0$.
\end{proposition}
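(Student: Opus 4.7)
The plan is to rewrite $f(x)$ in a factored form that isolates its two essential pieces, and then verify each inequality by a short arithmetic comparison. A direct expansion, combining the $n$-free contributions $(a-1)x$ and $ax(x-1-b)/b$ via the identity $b(a-1)+a(x-1-b)=ax-a-b$, yields
\[
  f(x) \;=\; \frac{(ax-a-b)\,\bigl[x(a+b)-bn\bigr]}{b(a+b)} \;-\; \frac{(x-1-b)\,p}{b}.
\]
For each $x$ appearing in the statement, $x \ge b+1$ forces $ax-a-b \ge (a-1)b>0$ and $x-1-b\ge 0$. Together with $p>0$ this makes the second summand nonpositive, and it vanishes precisely when $x=b+1$.

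Hence verifying $f(x)<0$ reduces to comparing $n$ with the threshold $x(a+b)/b$ at which the first summand changes sign. For $x=b+1$ this threshold is $a+b+1+\tfrac{a}{b}$; for $x=a+b-k$ with $k\in\{1,2,3\}$ it equals $2a+b-k+\tfrac{a(a-k)}{b}$. Subtracting these thresholds from the corresponding lower bound on $n$ yields the excesses $a-3+\tfrac{a(a-4)}{b}$, $\,1$, $\,3-\tfrac{a}{b}$, and $\,2-\tfrac{2a}{b}$, respectively. Under the standing assumptions $a\ge 4$ and $b\ge a$ the first three are strictly positive, so the first summand of $f(x)$ is already strictly negative and the inequality follows.

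The only delicate case is $x=a+b-1$ with $b=a$: the excess $2-2a/b$ then equals $0$, so the first summand is exactly $0$. Here the hypothesis $p>0$ becomes essential, because the second summand is $-(a-2)p/b$, which is strictly negative since $a\ge 4$; this forces $f(a+b-1)<0$. The main obstacle is spotting the factored identity displayed above; once it is in hand, each of the four cases collapses to a one-line arithmetic check, and the choice of the two distinct lower bounds in (i) and (ii) is explained exactly by the growth of $(x-1-b)$ as $x$ increases from $b+1$ to $a+b-1$.
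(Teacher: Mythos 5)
Your proof is correct and takes essentially the same route as the paper: your factored identity $f(x)=\frac{(ax-a-b)\left[x(a+b)-bn\right]}{b(a+b)}-\frac{(x-1-b)p}{b}$ is exactly the form the paper derives case by case (e.g.\ $f(a+b-3)=(4a+b-a^2-ab)\left(\frac{n}{a+b}-\frac{a+b-3}{b}\right)-\frac{(a-4)p}{b}$), and all four sign checks, including the borderline case $x=a+b-1$ with $b=a$ where $p>0$ is needed, come out right. The only immaterial differences are that you verify $f(a+b-2)<0$ directly while the paper deduces it from $f(b+1)<0$, $f(a+b-1)<0$ and convexity, and that your ``respectively'' list of excesses is ordered by increasing $x$ rather than by the order $k=1,2,3$ announced just before it.
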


\begin{proof}
(i) Assume that $n \geq 2a+b+\frac{a^2-3a}{b}-2$. Then we have
\begin{align*}
f(b+1)&=n+(a-1-\frac{an}{a+b})(b+1)=(1-a)(\frac{bn}{a+b}-b-1)\\
&\leq (1-a)\left[\frac{b}{a+b}(2a+b+\frac{a^2-3a}{b}-2)-b-1\right]\\
&=(1-a)\left[\frac{b(a-3)+a(a-4)}{a+b}\right] < 0
\end{align*}
and
\begin{align}
\nonumber f(a+b-3) & = n+(a-1- \frac{an}{a+b})(a+b-3) + [(a+b-3)-1-b] \frac{a(a+b-3)-p}{b} \\
\nonumber & = (4a+b-a^2-ab)(\frac{n}{a+b}-\frac{a+b-3}{b})-\frac{(a-4)p}{b}\\
\nonumber & \leq (4a+b-a^2-ab)(\frac{2a + b + \frac{a^2-3a}{b}-2}{a+b}-\frac{a+b-3}{b})-\frac{(a-4)p}{b}\\
\nonumber & = \frac{a(4-a)+b(1-a)}{a+b}-\frac{(a-4)p}{b}<0.
\end{align}

\noindent
(ii) Assume that $n \geq 2a+b+\frac{a^2-3a}{b}+1$. Then we have
\begin{align}
\nonumber f(a+b-1) & = n+(a-1- \frac{an}{a+b})(a+b-1) + [(a+b-1)-1-b] \frac{a(a+b-1)-p}{b} \\
\nonumber & = (2a+b-a^2-ab)(\frac{n}{a+b}-\frac{a+b-1}{b})-\frac{(a-2)p}{b}\\
\nonumber & \leq (2a+b-a^2-ab)(\frac{2a + b + \frac{a^2-3a}{b}+1}{a+b}-\frac{a+b-1}{b})-\frac{(a-2)p}{b}\\
\nonumber & = \frac{(a(2-a)+b(1-a))(-2a+2b)}{b(a+b)}-\frac{(a-2)p}{b}<0.
\end{align}
Since $f(x)$ is a quadratic function which has a positive leading coefficient
and $f(b+1) <0$ by (i),
we have $f(x)<0$ for all $x \in [b+1, a+b-1]$ so that $f(a+b-2)<0$.
\end{proof}

Now, we are ready to prove Theorem~\ref{main}.

\begin{proof}[Proof of Theorem~\ref{main}]
Assume to the contrary that $G$ has no even [$a$, $b$]-factor.
Then there exist disjoint subsets $S$ and  $T$ in $V(G)$ such that
\[0<q(S,T)-b|S|+a|T|-\sum_{v \in T}d_{G - S} (v)\]
by Corollary~\ref{cor}.
Let $p=-b|S|+a|T|$ so that
\[0<q(S,T)+p-\sum_{v \in T}d_{G - S} (v).\]
Note that $p>0$ since $q(S,T) -\sum_{v \in T}d_{G - S} (v) \leq 0$.

We consider four cases depending on $|T|$.

\begin{center}
	\begin{tikzpicture}
\draw (0,0)--(8,0);
\node at (0,0) {$\mid$};
\node at (-0.6,-0.35) {$|T| =$};
\node at (0.2,-0.35) {$\frac {b}a|S|$};
\node [above] at (1,0) {Case 2};
\node at (2,0) {$\mid$};
\node at (2,-0.35) {$b$};
\node [above] at (3,0) {Case 3};
\node at (4,-0.35) {$a+b-3$};
\node at (4,0) {$\mid$};
\node at (6,-0.35) {$a+b-1$};
\node at (6,0) {$\mid$};
\node [above] at (5,0) {Case 4};
\node at (8,-0.35) {$n$};
\node [above] at (7,0) {Case 1};
\node at (8,0) {$\mid$};
\end{tikzpicture}
\end{center}

In Case 4, we consider two subcases Case 4-1 and Case 4-2 depending on $n$.
To prove Case 1, Case 3, and Case 4-1, we use the same argument as in \cite{KouVes}.
For Case 2 and Case 4-2, we prove using a new technique.

{\it Case 1: $|T| \geq a+b$.}
Since $n \geq |S|+|T|+q(S,T)$, 
we have
\begin{align}\label{eq1}
\nonumber
&|S| =\frac{a|T|-p}{b} \leq \frac{a(n-|S|-q(S,T))-p}{b}
\iff |S|\leq \frac{a(n-q(S,T))-p}{a+b}\\
&\implies  |[S,T]| \leq |S||T| \leq \frac{a(n-q(S,T))-p}{a+b}|T|.
\end{align}
With Inequality~(\ref{eq1}), we have
\begin{align*}
 0&<q(S,T)-b|S|+a|T|-\sum_{v \in T}d_{G - S} (v)
 =q(S,T)+p-\sum_{v \in T}d_{G} (v)+|[S,T]|\\
 &\leq q(S,T)+p-\delta(G)|T| + \frac{a(n-q(S,T))-p}{a+b}|T|\\
 &\leq q(S,T)+p-\frac{an}{a+b}|T| + \frac{a(n-q(S,T))-p}{a+b}|T|\\
 &=q(S,T)+p - \frac{aq(S,T)+p}{a+b}|T|  \leq (1-a)q(S,T)< 0,
\end{align*}
which is a contradiction.


{\it Case 2: $|T| \leq b.$} Since $\delta(G)\ge a+1$, we have
\begin{align*}
 \nonumber 0&<q(S,T)-b|S|+a|T|-\sum_{v \in T} d_{G - S}(v)
 = q(S,T)-b|S|+a|T|-\sum_{v \in T} d_{G}(v)+|[S,T]|\\
 \nonumber &\leq q(S,T)-b|S|+a|T|-\delta(G)|T|+|S||T|
 \leq q(S,T)-b|S|+\left[ a-(a+1)+|S| \right] |T| \\
\nonumber &\leq q(S,T) - b|S| + b|S|-|T| 
 =q(S,T)-|T|,
\end{align*}
which implies $q(S,T) > |T| \geq 0.$ 
Let $l$ be the minimum of $|[Q,T]|$ over all components $Q$ of $G-(S\cup T)$ such that $|[Q,T]|$ is odd.
Then we have $l \geq 1$.
Also, we have $\sum_{v \in T} d_{G - S}(v) \geq lq(S,T)
\iff \frac{1}{l} \sum_{v \in T} d_{G - S} (v) \geq q(S,T)$.
Thus we have

\begin{align}\label{eq2}
\nonumber 0&<q(S,T)-b|S|+a|T|-\sum_{v \in T} d_{G - S}(v)
\leq 
\frac{1-l}{l}\sum_{v \in T} d_{G - S}(v)-b|S|+a|T|\\
\nonumber&= \frac{1-l}{l}\sum_{v \in T} d_{G}(v)-\frac{1-l}{l}|[S,T]|-b|S|+a|T|\\
\nonumber &\leq \frac{1-l}{l}\delta(G)|T|-\frac{1-l}{l}|S||T|-b|S|+a|T| \\
\nonumber&\leq \frac{(1-l)(a+1)}{l}|T|+|S|(\frac{l-1}{l}|T|-b)+a|T|\\
&\leq\frac{a+1-l}{l}|T|-|S|\frac{b}{l}.
\end{align}

If $a+1-l \leq 0$ in Inequality~(\ref{eq2}), then it is a contradiction.
Thus $a+1-l>0$, and since $|T| \le b$, we have 

\begin{align}
\nonumber 0 &< \frac{a+1-l}{l}|T|-|S|\frac{b}{l} 
\leq 
\frac{a+1-l-|S|}{l}b.
\end{align}

This gives $a+1-l-|S|>0$ so that $|S| \leq a-l$.

\medskip
\noindent
{\it Claim 1: $|S| = a-l$.}
Assume to the contrary that $|S| <a-l$.
Then each component $Q$ in $G-S-T$ with $|[Q,T]|=l$ can have at most $a-1$ neighbors in $S \cup T$.
Since $G$ is $a$-vertex-connected, there is only one such a component and every vertex of $T$ must have a neighbor in $Q$. Thus $|T| \le l$.
For $v \in V(T)$,
we have 
$$d(v) = |[\{v\}, S]|+\left(d_{G[T]}(v) +|[\{v\}, Q]|\right) \leq (a-l-1) + l = a-1 < a+1 \leq \delta(G),$$
which is a contradiction.
Thus $|S| = a-l$, which implies $S\neq \emptyset$ since $a$ is even and $l$ is odd.
Since $|S| \ge 1$ and $|T|\le b$, we have
\begin{align} \label{eq3}
\nonumber 0&<q(S,T)-b|S|+a|T|-\sum_{v \in T} d_{G - S}(v) \\
\nonumber &= q(S,T)-b|S|+a|T|-\sum_{v \in T} d_{G}(v)+|[S,T]|\\
\nonumber &\leq q(S,T)-b|S|+a|T|-\delta(G)|T|+|S||T|\\
\nonumber &\leq q(S,T)-b|S|+[a-(a+1)+|S|]|T|\\
&\leq q(S,T)-b|S|+(|S|-1)b 
=q(S,T)-b.
\end{align}
By Inequality~(\ref{eq3}), we have $q(S,T) \geq b+1$. Let $q(S,T)=b+\alpha$ for some $\alpha \geq1$.

Let $q_l$ be the number of components $Q$ of $G - (S \cup T)$ such that
$|[Q,T]|=l$. Since $|S|=a-l$, $|T| \le b$, and $q(S,T)=b+\alpha$, we have

\begin{align}\label{eq4}
 \nonumber 0&<q(S,T)-b|S|+a|T|-\sum_{v \in T} d_{G - S}(v)\\
\nonumber  &\leq q(S,T)-b(a-l)+a|T|-[lq_l +(l+2)(q(S,T)-q_l)]\\
 \nonumber &\leq (-l-1)q(S,T)+bl+2q_l
 = (-l-1)(b+\alpha)+bl+2q_l \\
 &=-b-(l+1)\alpha+2q_l.
\end{align}

By Inequality~(\ref{eq4}), we have $q_l > \frac{b+(1+l)\alpha}{2}$.
Note that $b$ and $1+l$ are even integers so that $\frac{b+(1+l)\alpha}{2}$ is an integer.
Thus $q_l \geq \frac{b+(1+l)\alpha}{2}+1$.

Let $m$ be the minimum of $|Q|$ over all components $Q$ in $G-(S \cup T)$ such that $|[Q,T]|=l$. 
There exists a vertex $v$ in $V(Q)$ such that $|[\{v\}, T]| \leq \frac{l}{m}$ by the pigeonhole principle.
Then we have
\begin{align}
\nonumber & \delta(G) \leq d(v)=d_Q(v)+|[\{v\},T]|+ |[\{v\},S]| \leq
(m-1)+(\frac{l}{m})+(a-l)\\
\nonumber \implies & m \leq \frac{\delta(G)+l+1-a-\sqrt{(\delta(G)+l+1-a)^2-4l}}{2} \\
\nonumber & \textrm{ or }
m \geq \frac{\delta(G)+l+1-a+\sqrt{(\delta(G)+l+1-a)^2-4l}}{2}.
\end{align}
Note that we have
\begin{align}\label{eq5}
\frac{\delta(G)+l+1-a-\sqrt{(\delta(G)+l+1-a)^2-4l}}{2}<1
 \iff \delta(G) \geq a+1.\end{align}
 
Since $m \ge 1$, we have $m \geq \frac{\delta(G)+l+1-a+\sqrt{(\delta(G)+l+1-a)^2-4l}}{2}$ by Inequality~(\ref{eq5}).
Note that we have
\begin{align}\label{eq6}
\nonumber &\frac{\delta(G)+l+1-a+\sqrt{(\delta(G)+l+1-a)^2-4l}}{2} \geq \delta(G)-|S| \\
\nonumber &\iff (\delta(G)+l+1-a)^2-4l \geq (\delta(G)-2|S|-1+a-l)^2\\
\nonumber &\iff (\delta(G)-|S|)(1-a+l+|S|) \geq l \\
&\iff \delta(G)-a+l \geq l 
 \iff \delta(G) \geq a.
\end{align}
By Inequality~(\ref{eq6}), we have $m \geq \delta(G)-|S|$, implying $n \geq |S|+|T|+q_l(\delta(G) -|S|)$.
Since $q_l \geq \frac{b+(1+l)\alpha}{2}+1$, $\delta(G) \geq \frac{an}{a+b}$, $2b-a(b+(1+l)\alpha) = (2-a)b-(1+l)\alpha <0$, and $a|T|-b|S| >0$,
we have
\begin{align*}
\nonumber &\frac{a+b}{a}\delta(G) \geq n 
\geq |S|+|T|+[\frac{b+(1+l)\alpha}{2}+1](\delta(G)-|S|)\\
&\iff \delta(G) \leq \frac{2a|T|-a[b+(1+l)\alpha]|S|}{2b -a[b+(1+l)\alpha]}
=\frac{2(a|T|-b|S|)}{2b -a[b+(1+l)\alpha]} +|S| <|S|,
\end{align*}
which is a contradiction.

{\it Case 3: $b+1 \leq |T| \leq a+b-3$.} Since $q(S,T) \le n-|S|-|T|$ and $
\delta(G) \ge \frac{an}{a+b}$, we have
\begin{align}\label{eq7}
\nonumber 0&<q(S,T)-b|S|+a|T|-\sum_{v \in T}d_{G - S} (v)\\
\nonumber &\leq (n-|S|-|T|)-b|S|+a|T|-\frac{an}{a+b}|T| + |S||T|\\
\nonumber &=n+(a-1-\frac{an}{a+b})|T|+(|T|-1-b)|S|\\
&=n+(a-1-\frac{an}{a+b})|T|+(|T|-1-b)\frac{a|T|-p}{b}.
\end{align}
Let $f(|T|)=n+(a-1-\frac{an}{a+b})|T|+(|T|-1-b)\frac{a|T|-p}{b}$.
Since $f$ is a quadratic function which has a positive leading coefficient,
the maximum value of $f$ occurs when $|T|=b+1$ or $|T|=a+b-3$.
By Proposition \ref{f(x)}, both $f(b+1)$ and $f(a+b-3)$ are negative, which contradicts
Inequality~(\ref{eq7}).

\medskip

{\it Case 4: $|T|=a+b-2$ or $a+b-1$.}

\quad {\it Case 4-1: $n\geq2a+b+\frac{a^2-3a}{b}+1$.}
By using the same argument with Case 3 and Proposition \ref{f(x)},
we have the desired result.

\quad {\it Case 4-2: $ 2a+b+\frac{a^2-3a}{b}-2 \le n < 2a+b+\frac{a^2-3a}{b}+1$.}
Let $|T| = a+b-k$ and $2a+b+\frac{a^2-3a}{b} -j \leq n < 2a+b+\frac{a^2-3a}{b} -j+1$
where $k \in \{1,2\}$ and $j \in \{0, 1, 2\}$.
Let $n=2a+b+\frac{a^2-3a}{b}-j+\epsilon$, where $0 \leq \epsilon < 1$.

\medskip
\noindent
{\it Claim 2: If $\delta(G) \geq j -k+i +n - |T| = i +a+\frac{a^2-3a}{b} +\epsilon$, 
then $a(3-k)-\epsilon b \geq (a-k)(j-k) + i(a+b-k) + (a-k-1)q(S,T) +2$, where $i$ is an integer.}
By Lemma \ref{paritylemma}, we have
\begin{align*}
 2 &\leq q(S,T) - b|S| +a|T|-\sum_{v \in T}d_G(v) +|[S,T]| \\
&\leq q(S,T)-b|S|+a|T|-\delta(G)|T|+|S||T|\\
 &\leq q(S,T)-b|S|+a|T|-(i +a+\frac{a^2-3a}{b} +\epsilon)|T|+|S||T|\\
&\leq q(S,T) +(a-k)(n-|T|-q(S,T))-(i +\frac{a^2-3a}{b} +\epsilon)(a+b-k)\\
&=(k+1-a)q(S,T) +(a-k)(a-i-j+k)-b(i+\frac{a^2-3a}{b} +\epsilon)\\
&=(k+1-a)q(S,T) +(a-k)(k-j)-i(a+b-k)+a(3-k)-\epsilon b.
\end{align*}
Thus we have the desired result. 

Since $\delta(G) \geq \frac{an}{a+b}$, 
we have
\begin{align}
\nonumber \delta(G) -n +|T| &\geq \frac{an}{a+b} -n+|T| =-\frac{bn}{a+b}+(a+b-k)\\
\nonumber &= \frac{-b(2a+b+\frac{a^2-3a}{b}-j+\epsilon)+(a+b-k)(a+b)}{a+b} \\
 & = \frac{(3-k)a+(j-k)b - \epsilon b}{a+b} \label{eq8} \\
\nonumber & > j-k-1,
\end{align}
which is true for $j \in \{0,1,2\}$ and $\epsilon \in [0,1)$.
Since $\delta(G) -n +|T|$ is an integer, we obtain $\delta(G) -n +|T| \geq j-k$,
which satisfies the condition on $\delta(G)$ when $i=0$ in Claim 2.
Thus  we have $a(3-k)-\epsilon b \geq (a-k)(j-k) + (a-k-1)q(S,T) +2$.
By Inequality~(\ref{eq8}), we have
\begin{align*}
\delta(G) -n +|T| &\geq \frac{(3-k)a+(j-k)b - \epsilon b}{a+b} \\
&\geq \frac{(j-k)b+(a-k)(j-k)  + (a-k-1)q(S,T) +2}{a+b}\\
&=j-k+\frac{-k(j-k) + (a-k-1)q(S,T) +2}{a+b} > j-k,
\end{align*}
which is true for $k \in \{1,2\}$ and $j \in \{0, 1, 2\}$ and $a\geq 4$.
Since $\delta(G) -n +|T|$ is an integer, we obtain $\delta(G) -n +|T| \geq j-k+1$,
which satisfies the condition on $\delta(G)$ when $i=1$ in Claim 2.
Thus  we have 
\begin{align}\label{eq10}
a(3-k)-\epsilon b \geq (a-k)(j-k) + (a+b-k) + (a-k-1)q(S,T) +2.
\end{align}

When $k=j=1$, Inequality~(\ref{eq10}) becomes $a-(\epsilon +1)b \geq (a-2)q(S,T) +1$
which is a contradiction since $a-(\epsilon +1)b \leq 0$ and $(a-2)q(S,T) +1 > 0$.
Similarly, we have a contradiction when $(k,j)\in\{(1,2),(2,1),(2,2)\}$
by using Inequality~(\ref{eq10}).
The remaining case is when $j=0$. By Inequality~(\ref{eq8}) and~(\ref{eq10}), we improve $\delta(G)$ as follows:
\begin{align*}
\delta(G) -n +|T| &\geq \frac{(3-k)a+(j-k)b - \epsilon b}{a+b} \\
&\geq \frac{(j-k)b+(a-k)(j-k) + (a+b-k) + (a-k-1)q(S,T) +2}{a+b}\\
&=j-k+1+\frac{-k(j-k+1) + (a-k-1)q(S,T) +2}{a+b} > j-k+1,
\end{align*}
which is true for $(k,j) \in \{(1,0),(2,0)\}$ and $a\geq 4$.
Since $\delta(G) -n +|T|$ is an integer, we obtain $\delta(G) -n +|T| \geq j-k+2$,
which satisfies the condition on $\delta(G)$ when $i=2$ in Claim 2.
Thus  we have 
\begin{align}\label{eq11}
a(3-k)-\epsilon b \geq (a-k)(j-k) + 2(a+b-k) + (a-k-1)q(S,T) +2.
\end{align}

When $k=1$ and $j=0$, Inequality~(\ref{eq11}) becomes $a-(\epsilon +2)b \geq (a-2)q(S,T) +2$
which is a contradiction since $a-(\epsilon +2)b < 0$ and $(a-2)q(S,T) +2 > 0$.
Similarly, we get a contradiction when $k=2$ and $j=0$,
which completes the proof.



\end{proof}

\section{Concluding Remarks}

In this section, we provide some questions and conjecture.

\begin{question}
If we replace ``$\kappa(G)$'' in Theorem~\ref{main} by ``$\kappa'(G)$'', then do we have the same conclusion?
\end{question}

\begin{question}
If we replace ``$\delta(G) \ge \frac{an}{a+b}$'' in Theorem~\ref{main} by ``$\sigma_2(G) \ge \frac{2an}{a+b}$'', then do we have the same conclusion?
\end{question}

We might be also interested in some sufficient conditions for a certain eigenvalue in a certain graph $G$ to have an even $[a,b]$-factor. If $G$ has an even $[a,b]$-factor, then we have $\lambda_1(G) \ge a$ since $\lambda_1(G) \ge \delta(G)$, where $\lambda_1(G)$ is the largest eigenvalue of $G$. Is there a sufficient condition for $\lambda_1(G)$ in a graph $G$ to have an even $[a,b]$-factor? 
If we restrict our attention to a complete bipartite graph, which looks the simplest case, then it is easy to get a sufficient condition for the largest eigenvalue.

\begin{observation} \label{obs}
	Let $G$ be the complete bipartite graph $K_{x,n-x}$ such that $n \ge 2x > 0$. 
	Then 
 $G$ has an $[a,b]$-factor if and only if $$\lambda_1(G) \ge \begin{cases} \sqrt{a(n-a)} & \text{ if } n < a+b \\ \quad \frac{\sqrt{ab}}{a+b}n & \text{ if } n \ge a+b. \end{cases}$$
 
\end{observation}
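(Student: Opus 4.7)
The plan is to exploit the explicit formula $\lambda_1(K_{x,n-x})=\sqrt{x(n-x)}$, so the spectral hypothesis becomes an elementary inequality in $x$, and to combine it with a direct combinatorial characterization of $[a,b]$-factors in a complete bipartite graph. Both sides of the claimed equivalence then reduce to a condition on the single parameter $x$.

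On the combinatorial side, a spanning subgraph $H$ of $K_{x,n-x}$ with all degrees in $[a,b]$ has edge count $m=|E(H)|$ that must satisfy both $ax\le m\le x\min(b,n-x)$ (summing degrees on the $x$-side) and $a(n-x)\le m\le (n-x)\min(b,x)$, where the $\min$'s reflect that every vertex has at most $|\text{other side}|$ potential neighbors. Conversely, any integer $m$ in the common range is realized via Gale--Ryser applied to a near-uniform pair of bipartite degree sequences inside the complete host. Assuming $x\le n-x$, a short case analysis on whether $b\le n-x$ reduces existence to two requirements: $a\le x$, together with $a(n-x)\le bx$ whenever $b\le n-x$.

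For $n\ge a+b$, the spectral inequality $\sqrt{x(n-x)}\ge\frac{\sqrt{ab}}{a+b}n$ rearranges to $\frac{an}{a+b}\le x\le\frac{bn}{a+b}$, and since $b\ge a$ gives $\frac{bn}{a+b}\ge n/2\ge x$, this collapses to $x\ge\frac{an}{a+b}$; but $n\ge a+b$ forces $\frac{an}{a+b}\ge a$, so the condition $a\le x$ is automatic and the requirement $a(n-x)\le bx$ is exactly $x\ge\frac{an}{a+b}$. For $n<a+b$ (with the tacit $n\ge 2a$ so that $n-a\ge n/2\ge x$), the inequality $\sqrt{x(n-x)}\ge\sqrt{a(n-a)}$ collapses to $x\ge a$; on the combinatorial side, $n<a+b$ forces $n-b<a\le x$, hence $b>n-x$, making the constraint $a(n-x)\le bx$ vacuous and leaving only $x\ge a$. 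Thus in both cases the spectral and combinatorial conditions coincide.

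The main technical point is the case split on whether the upper bound on $m$ comes from the degree cap $b$ or from the opposite part-size; once this bookkeeping is settled, the rest is routine, amounting to factoring the quadratics $x(n-x)-\frac{abn^2}{(a+b)^2}$ and $x(n-x)-a(n-a)$ whose roots are $\{\frac{an}{a+b},\frac{bn}{a+b}\}$ and $\{a,n-a\}$ respectively.
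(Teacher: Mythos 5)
Your proposal is correct and follows essentially the same route as the paper: both reduce the spectral condition to a threshold on $x$ via $\lambda_1(K_{x,n-x})=\sqrt{x(n-x)}$ and match it against the degree-counting characterization $x\ge a$ and $a(n-x)\le bx$ of when $K_{x,n-x}$ admits an $[a,b]$-factor. You merely supply details the paper leaves implicit --- the realizability of the degree sequence (via Gale--Ryser) and the caveat that the case $n<a+b$ tacitly requires $n\ge 2a$, without which the ``if'' direction of the stated equivalence can fail --- so no further comparison is needed.
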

\begin{proof}  $G$ has an $[a,b]$-factor $F$
	  if and only if $$x \ge a \text{ and } (n-x-b)x \le (n-x)(x-a) (\Leftrightarrow x \ge \frac{an}{a+b})$$ since $\delta(F) \ge a$ and $\Delta(F) \le b$.
	
Thus we have the desired result with $\lambda_1(G)=\sqrt{x(n-x)}$.
\end{proof}

Among $n$-vertex graphs $G$ without $[a,b]$-factor, we guess that the $n$-vertex graph $H_{n,a}$ obtained from one vertex and a copy of $K_{n-1}$ by adding $a-1$ edges between them has the largest eigenvalue. Note that there are $n-a$ vertices with degree $n-2$, $a-1$ vertices with degree $n-1$, and 1 vertex with degree $a-1$ in the graph $H_a$. Thus $H_a$ cannot have an $[a,b]$-factor.

\begin{conjecture}
	Let $an$ be an even integer at least $2$, where $n \ge a+1$, and let $\rho(n,a)$ be the largest eigenvalue of $H_{n,a}$. If $G$ is an $n$-vertex graph with $\lambda_1(G) > \rho(n,a),$ then $G$ has an $[a,b]$-factor.
\end{conjecture}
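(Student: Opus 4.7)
The plan is to prove the contrapositive: if the $n$-vertex graph $G$ has no $[a,b]$-factor, then $\lambda_1(G) \le \rho(n,a)$, with equality forcing $G = H_{n,a}$. First apply Lov\'asz's $(g,f)$-factor theorem (Theorem~\ref{lov}) with $g \equiv a$ and $f \equiv b$ to obtain disjoint $S, T \subseteq V(G)$ satisfying
\[
a|T| - b|S| - \sum_{v \in T} d_{G-S}(v) - q(S,T) \ge 1,
\]
where the Lov\'asz odd-component correction $q(S,T)$ is nontrivial only when $a=b$. Then split into cases on $(|S|,|T|)$, in the spirit of the proof of Theorem~\ref{main}.

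The easy case is $S=\emptyset$ and $|T|=1$, which is simply the statement that some vertex $v_0$ has $d_G(v_0)\le a-1$. Since $G-v_0$ embeds into $K_{n-1}$ and $v_0$ has at most $a-1$ neighbors, $G$ is a spanning subgraph of $H_{n,a}$, and strict monotonicity of the Perron eigenvalue under edge addition in a connected graph yields $\lambda_1(G)\le \rho(n,a)$, with equality iff $G=H_{n,a}$. This case alone already shows that among graphs with $\delta(G)\le a-1$, the conjecture holds.

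The main obstacle is the nontrivial obstruction case $\delta(G)\ge a$ with $|S|\ge 1$ or $|T|\ge 2$. Using $d_{G-S}(v)\ge \delta(G)-|[\{v\},S]|$, the Lov\'asz inequality forces $|[S,T]|>b|S|$ and hence $|T|>b$. My preferred approach is a Kelmans-type edge-switching argument driven by the Perron eigenvector $\mathbf{x}$: pick a spectral extremizer $G$ among $n$-vertex graphs without an $[a,b]$-factor, and show that unless $G$ contains a dominating clique of size $n-1$, one can reroute an edge from a low-$\mathbf{x}$-weight vertex to a high-$\mathbf{x}$-weight vertex while preserving the Lov\'asz obstruction; this would strictly increase $\lambda_1$, contradicting extremality, and the residual structural rigidity would then pin the extremizer to $H_{n,a}$. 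A fallback is to feed the edge deficit produced by $(S,T)$ into a Hong-type bound such as $\lambda_1(G)\le \sqrt{2e(G)-n+1}$, but the borderline regime $|T|=b+1$ or $|S|=1$ will almost certainly demand a detailed case analysis tracking the degree sequence, mirroring the four-case breakdown in Section~3. Verifying that the Kelmans switch indeed preserves the Lov\'asz obstruction in every sub-case, and ruling out the possibility of multiple non-isomorphic spectral extremizers, is the technical heart of the conjecture.
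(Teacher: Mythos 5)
First, a point of record: the paper does not prove this statement --- it is posed as an open conjecture in the concluding remarks --- so there is no proof of the authors' to compare yours against, and your attempt has to stand entirely on its own. The parts that do stand are the reduction via Lov\'asz's theorem and the case $\delta(G)\le a-1$: a vertex of degree at most $a-1$ makes $G$ a spanning subgraph of $H_{n,a}$, whence $\lambda_1(G)\le\rho(n,a)$. (One small slip there: when the Lov\'asz condition fails one obtains $a|T|-b|S|-\sum_{v\in T}d_{G-S}(v)+q(S,T)\ge 1$, with $+q(S,T)$ rather than $-q(S,T)$; this is harmless when $a<b$, where $q\equiv 0$ for constant $g\equiv a<b\equiv f$, but it matters when $a=b$.)

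The genuine gap is the entire main case $\delta(G)\ge a$, which you yourself label ``the technical heart of the conjecture.'' The Kelmans-type switching plan is stated only as an intention: you give no argument that the proposed edge rotation preserves the property of having no $[a,b]$-factor (equivalently, preserves some Lov\'asz obstruction pair $(S,T)$), and this is precisely where such spectral-extremal arguments live or die, since rerouting an edge toward a high-weight vertex can destroy every obstruction. ``Verifying the switch preserves the obstruction in every sub-case'' is not a verification step to be deferred; it is the theorem. You also do not rule out spectral extremizers structurally far from $H_{n,a}$. The fallback via Hong's bound $\lambda_1(G)\le\sqrt{2e(G)-n+1}$ provably cannot close this gap: the bound depends only on $e(G)$, the graph $H_{n,a}$ itself shows that a graph without an $[a,b]$-factor may have $\binom{n-1}{2}+a-1$ edges, and at that edge count Hong's bound is necessarily at least $\rho(n,a)$ (it is valid for $H_{n,a}$) and is not tight there, so it cannot certify $\lambda_1\le\rho(n,a)$ for other graphs in the dense regime $|T|>b$ where the obstruction actually lives. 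As written, your argument establishes the conjecture only for graphs with minimum degree at most $a-1$; the statement remains open.
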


We mention that $\lambda_1(H_{n,a})$ equals the largest root of $x^3-(n-3)x^2-(a+n-3)x-a^2+(a-1)n+1=0$ without giving a reason in detail.


\end{document}